\documentclass[11pt]{amsart}
\usepackage{geometry}
\geometry{a4paper}
\usepackage{enumitem}
\usepackage{amstext}
\usepackage{amsthm}
\usepackage{amssymb}

\makeatletter
\theoremstyle{plain}
\newtheorem{thm}{\protect\theoremname}
 \theoremstyle{definition}
 \newtheorem*{defn*}{\protect\definitionname}
 \theoremstyle{definition}
 \newtheorem*{notation}{Notation}
  \theoremstyle{plain}
  \newtheorem{prop}[thm]{\protect\propositionname}
 \newlist{casenv}{enumerate}{4}
 \setlist[casenv]{leftmargin=*,align=left,widest={iiii}}
 \setlist[casenv,1]{label={{\itshape\ \casename} \arabic*.},ref=\arabic*}
 \setlist[casenv,2]{label={{\itshape\ \casename} \roman*.},ref=\roman*}
 \setlist[casenv,3]{label={{\itshape\ \casename\ \alph*.}},ref=\alph*}
 \setlist[casenv,4]{label={{\itshape\ \casename} \arabic*.},ref=\arabic*}
  \theoremstyle{remark}
  \newtheorem*{rem*}{\protect\remarkname}
  \theoremstyle{plain}
  \newtheorem{cor}[thm]{\protect\corollaryname}
  \theoremstyle{plain}
  \newtheorem{lem}[thm]{\protect\lemmaname}

\makeatother

  \providecommand{\corollaryname}{Corollary}
  \providecommand{\definitionname}{Definition}
  \providecommand{\lemmaname}{Lemma}
  \providecommand{\propositionname}{Proposition}
  \providecommand{\remarkname}{Remark}
 \providecommand{\casename}{Case}
\providecommand{\theoremname}{Theorem}

\def\Xint#1{\mathchoice
   {\XXint\displaystyle\textstyle{#1}}%
   {\XXint\textstyle\scriptstyle{#1}}%
   {\XXint\scriptstyle\scriptscriptstyle{#1}}%
   {\XXint\scriptscriptstyle\scriptscriptstyle{#1}}%
   \!\int}
\def\XXint#1#2#3{{\setbox0=\hbox{$#1{#2#3}{\int}$}
     \vcenter{\hbox{$#2#3$}}\kern-.5\wd0}}
\def\fint{\Xint-}

\global\long\def\osc{\operatornamewithlimits{osc}}
\global\long\def\Diff{\operatorname{Diff}}

\global\long\def\co{\colon\thinspace}

\subjclass[2000]{53C44; 35K45, 35K59, 58J35}

\title{A Heat Flow for Diffeomorphisms of Flat Tori}
\author{Ben Andrews}
\address{Mathematical Sciences Institute, Australian National University,
ACT 2601, Australia}
\email{ben.andrews@anu.edu.au}
\urladdr{http://maths.anu.edu.au/\textasciitilde{}andrews/}
\thanks{This research was partially supported by the Australian Laureate
Fellowship grant FL150100126 of the Australian Research Council.}

\author{Anthony Carapetis}
\email{anthony.carapetis@gmail.com}
\urladdr{http://a.carapetis.com/math}

\begin{document}
\begin{abstract}
In this paper we study the parabolic evolution equation
$\partial_{t}u=(\left|Du\right|^{2}+2\left|\det Du\right|)^{-1}\Delta u$,
where $u\co M\times[0,\infty)\to N$ is an evolving map between compact
flat surfaces. We use a tensor maximum principle for the induced metric
$Du^{T}Du$ to establish two-sided bounds on the singular values of
$Du$, which shows that unlike harmonic map heat flow, this flow preserves
diffeomorphisms. A change of variables for $Du$ then allows us to
establish a $C^{\alpha}$ estimate for the coefficient of the tension
field, and thus (thanks to the quasilinear structure and the Schauder
estimates) we get full regularity and long-time existence. We conclude
with some energy estimates to show convergence to an affine diffeomorphism.
\end{abstract}

\maketitle

\section{Introduction}

Unlike many similar problems in geometry, there is no widely applicable
heat-flow technique to produce harmonic (or otherwise geometrically
nice) representatives of diffeomorphisms. Unfortunately harmonic map
heat flow does not work in dimensions greater than one: by choosing
the right 2-jet at a point, one can arrange for the derivative to
become singular a short time afterwards, so the flow does not stay
inside the space of diffeomorphisms. In this paper we define and study
a heat flow for diffeomorphisms of flat compact surfaces. Let $M,N$
be oriented flat tori; i.e. $M=\mathbb{R}^{2}/\Gamma_{1},N=\mathbb{R}^{2}/\Gamma_{2}$
for some integer lattices\footnote{That is, additive subgroups of $\mathbb{R}^{2}$ isomorphic to $\mathbb{Z}^{2}$.}
$\Gamma_{1}$ and $\Gamma_{2}$ acting by translation. Without loss
of generality we can assume the initial data $u_{0}\in\Diff\left(M,N\right)$
is orientation-preserving, i.e. $\det Du>0$ when we choose oriented
coordinates on both $M$ and $N$. Then we consider the Cauchy problem
for a family of maps $u\co M\times[0,T)\to N$ defined by
\begin{equation}
\left\{ \begin{array}{lc}
{\displaystyle \frac{\partial u}{\partial t}=\frac{\Delta u}{\left|Du\right|^{2}+2\det Du}}\\
u\left(x,0\right)=u_{0}
\end{array}\right.\label{eq:main-flow}
\end{equation}
where $\Delta u$ is the tension field (or simply the component-wise
Laplacian in Cartesian coordinates). In this notation the denominator
is somewhat opaque - see the next section for a more geometrically
enlightening formula. Our main result is the following:
\begin{thm}
There exists a solution $u\in C^{\infty}\left(M\times[0,\infty),N\right)$
of (\ref{eq:main-flow}) such that each $u\left(\cdot,t\right)$ is
a diffeomorphism, $u\left(0,t\right)=u_{0}$ and $u\left(\cdot,t\right)$
converges smoothly to a harmonic map $u_{\infty}$ as $t\to\infty$.\label{thm:main-theorem-T2}
\end{thm}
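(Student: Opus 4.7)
The plan is to run the standard continuation argument: establish short-time existence, derive \emph{a priori} bounds that prevent the flow from exiting $\Diff(M,N)$ or from degenerating as the denominator tends to $0$, and finally extract a limit as $t\to\infty$ using energy monotonicity.

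First observe that on orientation-preserving immersions the denominator in (\ref{eq:main-flow}) factorises as $|Du|^{2}+2\det Du=(\sigma_{1}+\sigma_{2})^{2}$, where $\sigma_{1},\sigma_{2}>0$ are the singular values of $Du$. Thus in a neighbourhood of any $v\in\Diff(M,N)$ the equation (\ref{eq:main-flow}) is a quasilinear uniformly parabolic system with smooth coefficients, and standard parabolic theory produces a unique smooth solution on a short time interval. Let $T^{*}\in(0,\infty]$ denote the maximal time on which such a smooth diffeomorphic solution exists.

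The heart of the proof is to obtain uniform two-sided bounds on the singular values $\sigma_{i}$, which will show $T^{*}=\infty$ and control all higher derivatives. To this end I would derive the evolution equation of the pulled-back metric $g=Du^{T}Du$ (whose eigenvalues are $\sigma_{1}^{2},\sigma_{2}^{2}$) and apply Hamilton's tensor maximum principle. A uniform positive lower bound on $g$ prevents $\sigma_{1}\sigma_{2}=\det Du$ from approaching $0$, while an upper bound prevents $|Du|$ from blowing up; together these guarantee that $u(\cdot,t)$ remains a diffeomorphism and that the denominator $(\sigma_{1}+\sigma_{2})^{2}$ stays bounded both above and away from $0$. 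The specific coefficient $(\sigma_{1}+\sigma_{2})^{-2}$ in (\ref{eq:main-flow}) is presumably chosen so that the zero-order terms in the evolution of $g$ are compatible with preserving the cone of pinched positive-definite symmetric tensors; I expect verifying this algebraic compatibility to be the main analytic obstacle of the whole proof.

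With the singular values pinched, (\ref{eq:main-flow}) is a uniformly parabolic system whose leading coefficient is \emph{a priori} only $L^{\infty}$. The next step is to upgrade regularity: using the change of variables for $Du$ promised in the abstract, one obtains an equation for $Du$ itself amenable to De Giorgi--Nash--Moser estimates, giving a uniform $C^{\alpha}$ bound on $Du$ and hence on the coefficient. The quasilinear Schauder theory then produces $C^{2+\alpha,1+\alpha/2}$ bounds on $u$, and a standard bootstrap yields uniform $C^{k}$ estimates for every $k$. A continuation argument then extends the solution to $[0,\infty)$.

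Finally, convergence as $t\to\infty$ would be obtained from monotonicity of the Dirichlet energy $E(u)=\tfrac{1}{2}\int_{M}|Du|^{2}\,d\mu$: differentiating and integrating by parts using (\ref{eq:main-flow}) gives
\[
\frac{d}{dt}E(u(\cdot,t))=-\int_{M}\frac{|\Delta u|^{2}}{|Du|^{2}+2\det Du}\,d\mu\le 0.
\]
Integrating in time together with the uniform $C^{k}$ bounds and Arzel\`a--Ascoli yields smooth subsequential limits $u_{\infty}$ satisfying $\Delta u_{\infty}\equiv 0$; because the singular-value pinching passes to the limit, $u_{\infty}$ is a harmonic diffeomorphism between flat tori and is therefore affine. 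Uniqueness of the limit (upgrading subsequential to full convergence) should then follow either from a \L{}ojasiewicz--Simon inequality at the finite-dimensional critical manifold of affine maps in the fixed homotopy class, or directly by combining the integrated energy identity with the pinched metric to obtain exponential decay of $u_t$ in $C^{k}$.
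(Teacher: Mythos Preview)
Your outline matches the paper's proof closely: tensor maximum principle for $Du^{T}Du$ to pin the singular values, a H\"older estimate on the coefficient, Schauder bootstrap, long-time existence by continuation, subsequential convergence via Dirichlet energy monotonicity, and then full convergence by exponential decay (the paper takes your second option, proving $\|D^{2}u\|_{L^{2}}^{2}$ decays exponentially via Gagliardo--Nirenberg and Poincar\'e, not \L{}ojasiewicz--Simon).

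The one place where your sketch is materially under-specified is the H\"older step. Saying the change of variables gives ``an equation for $Du$ itself amenable to De Giorgi--Nash--Moser estimates'' is dangerous: $Du$ is a $2\times 2$ system, and De Giorgi--Nash--Moser / Krylov--Safonov theory does not apply to systems without additional structure. The paper's actual change of variables is to the \emph{scalar} quantities $r=\lambda_{1}+\lambda_{2}$ and $\theta$ (the rotation angle between the singular frames), defined by $r\cos\theta=u^{1}_{1}+u^{2}_{2}$ and $r\sin\theta=u^{2}_{1}-u^{1}_{2}$. The miracle specific to the coefficient $F=(\lambda_{1}+\lambda_{2})^{-2}$ is that the evolution of $\theta$ decouples completely, $P\theta=0$, so Krylov--Safonov immediately gives $\theta\in C^{\alpha}$; then a weak Harnack argument (perturbing $r^{p}$ by $c(\theta-\theta(X_{0}))^{2}$ to manufacture a divergence-form supersolution, paired with the subsolution $r^{2}$) yields $r\in C^{\alpha}$ and hence $F=r^{-2}\in C^{\alpha}$. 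You never obtain $C^{\alpha}$ control on all of $Du$ at this stage---only on the coefficient $F$---but that is exactly what Schauder requires.
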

We will frequently (and often silently) swap between thinking of $u\left(\cdot,t\right)$
as a map between compact manifolds $M\to N$ and as the corresponding
map $\mathbb{R}^{2}\to\mathbb{R}^{2}$ between universal covers -
the former is conceptually what we care about but the latter means
we can choose Cartesian coordinates and work with a very concrete
global PDE system.
\begin{notation}
\noindent From here forward $u$ will always be a solution of (\ref{eq:main-flow}),
$F=(\left|Du\right|^{2}+2\left|\det Du\right|)^{-1}$ the \emph{diffusion
coefficient }and $P=\partial_{t}-F\left(Du\right)\Delta$ (acting
on scalar functions) the linear parabolic operator associated with
(\ref{eq:main-flow}). We will use subscript notation $u_{jk}^{i}={\displaystyle \frac{\partial^{2}u^{i}}{\partial x^{j}\partial x^{k}}}$
for the derivatives of $u$ ($x$ \emph{any }Cartesian coordinate
system on $\mathbb{R}^{2}$) and the summation convention for repeated
indices. Symmetrisation is denoted by $T_{(ij)}=\frac{1}{2}\left(T_{ij}+T_{ji}\right)$.
The derivative $Du$ of maps/functions defined
on $M\times[0,T)$ refers to the spatial components alone - time derivatives
will only appear explicitly as $\partial_{t}$.
\end{notation}

\section{Gradient bounds and Preservation of Diffeomorphisms}

Since the initial map is a diffeomorphism and any homotopy preserves
degree, the only way for the map to no longer be a diffeomorphism
at a later time is if it is not even a local diffeomorphism; that
is, if $Du$ becomes singular at some point. Thus we can establish
that the flow preserves diffeomorphisms by preserving the invertibility
of $Du$ using the maximum principle. In particular we will preserve
bounds on the singular values $\lambda_{i}$ in the singular value
decomposition
\begin{equation}
Du\left(e_{i}\right)=\lambda_{i}v_{i},\,i=1,2.\label{eq:svd}
\end{equation}
Recall here that $e_{i},v_{i}$ are orthonormal bases for $T_{x}M$,
$T_{u(x)}N$ respectively and $\lambda_{i}\ge0$ are the square roots
of the eigenvalues of the induced metric $h=u^{*}g_{N}=Du^{T}Du$.
Note that $\lambda_{1}\lambda_{2}=\left|\det Du\right|$ and $\lambda_{1}^{2}+\lambda_{2}^{2}=\left|Du\right|^{2}$,
so our flow can be written more naturally as:
\[
\frac{\partial u}{\partial t}=\frac{\Delta u}{\left(\lambda_{1}+\lambda_{2}\right)^{2}}
\]

We need lower bounds on $\lambda_{i}$ to preserve diffeomorphisms
and upper bounds to establish the H\"older estimate in the next section.
Rather than studying the evolution of the singular values directly
(which requires some hairy computations and gets unwieldy at singular
points $\lambda_{1}=\lambda_{2}$ where regularity fails), we will
focus on the evolution of $h_{ij}=u_{i}^{\alpha}u_{j}^{\alpha}$.
Since the eigenvalues of $h$ are $\lambda_{1}^{2}$ and $\lambda_{2}^{2}$,
preserving bounds $m\le\lambda_{i}\le M$ is equivalent to preserving
the inequality $m^{2}\delta\le h\le M^{2}\delta$ of quadratic forms.
\begin{prop}
\label{prop:c1-estimate}If the singular values of $Du$ satisfy $m\le\lambda_{i}\le M$
at the initial time, then this inequality persists for all later times.\end{prop}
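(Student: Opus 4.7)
The plan is to apply a tensor maximum principle to the induced metric $h_{ij}=u_i^\alpha u_j^\alpha$, whose eigenvalues are the squares of the singular values $\lambda_i$. I first compute the evolution of $h$. Differentiating $h_{ij}$ in time and space and using $\partial_t u = F\Delta u$ in Cartesian coordinates gives
\[
Ph_{ij}=2F_{(i}(\Delta u^\alpha)u_{j)}^\alpha-2Fu_{ik}^\alpha u_{jk}^\alpha.
\]
The quadratic term $-2F u_{ik}^\alpha u_{jk}^\alpha$ is manifestly negative semidefinite as a $(0,2)$-tensor. The first-order term, which comes from the spatial variation of the diffusion coefficient $F=F(Du)$, has indefinite sign and is the source of the main technical difficulty.

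To preserve the upper bound $h\le M^2\delta$ I argue by contradiction. Suppose $(x_0,T)$ is a first spacetime point where the largest eigenvalue of $h$ reaches $M^2$. After rotations in $M$ and $N$ I may assume that Cartesian coordinates at $(x_0,T)$ are aligned with the SVD of $Du$, so $u_i^\alpha(x_0,T)=\lambda_i\delta_i^\alpha$ with $\lambda_1=M\ge\lambda_2$. Consider first the generic case $\lambda_1>\lambda_2$, in which $\varphi:=\lambda_1^2$ is smooth near $(x_0,T)$ and attains its spacetime maximum $M^2$ there, so $P\varphi\ge 0$ at this point. Using the classical perturbation formula for a simple eigenvalue,
\[
\Delta\varphi=(\Delta h)_{11}+\sum_{k=1}^{2}\frac{2((\partial_k h)_{12})^2}{\lambda_1^2-\lambda_2^2},
\]
together with the first-order extremality condition $\nabla\varphi=0$ (which in SVD coordinates reduces to $u_{1k}^1=0$ for $k=1,2$) and the formula for $Ph$, a direct algebraic manipulation collapses all indefinite contributions into a single sum of squares:
\[
P\varphi=-\frac{2\lambda_1^2\left[(u_{11}^2)^2+(u_{12}^2+u_{22}^1)^2\right]}{(\lambda_1+\lambda_2)^3(\lambda_1-\lambda_2)}\le 0.
\]
Together with $P\varphi\ge 0$ this forces equality, which a standard perturbation (replace $M$ by $M+\epsilon$, argue as above where now $\lambda_1=M+\epsilon>\lambda_2$ is automatic, then let $\epsilon\to 0$) turns into a genuine contradiction. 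The degenerate case $\lambda_1=\lambda_2=M$ at the contact point is handled by continuity, since then $h=M^2\delta$ at the contact point and one can approximate from the non-degenerate regime. An essentially symmetric computation, now on $\lambda_2^2$ at a putative first violation of the lower bound $m^2\delta\le h$, yields $P(\lambda_2^2)\ge 0$ in contradiction with the minimum principle, so both inequalities are preserved simultaneously.

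The hard part is the cancellation in the calculation of $P\varphi$: a priori the indefinite first-order term in $Ph_{ij}$ could dominate the negative quadratic contribution, and it is not obvious that the tensor inequality is preserved. What saves the day is the specific form $F=(\lambda_1+\lambda_2)^{-2}$, which is tuned so that when the first-order contribution is combined with the eigenvalue-repulsion term arising from the perturbation formula for $\Delta(\lambda_1^2)$, all free-sign terms align exactly into the sum of squares displayed above. This also gives a conceptual explanation for the choice of diffusion coefficient in the flow.
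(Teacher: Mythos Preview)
Your approach is essentially the paper's: both preserve $m^2\delta\le h\le M^2\delta$ by a tensor maximum principle for $h=Du^TDu$, and the decisive step is the same sum-of-squares cancellation
\[
\frac{2\lambda_1^2 F}{\lambda_2^2-\lambda_1^2}\Bigl((u_{11}^2)^2+(u_{12}^2+u_{22}^1)^2\Bigr)
\]
that your formula for $P(\lambda_1^2)$ reproduces (the paper even remarks in a footnote that its optimal null-vector extension recovers exactly $P(\lambda_1^2)$). The only substantive difference is packaging: the paper invokes the refined Hamilton-type tensor maximum principle of \cite{andrews-pinch}, optimizing over linear extensions $\Gamma$ of the null eigenvector, whereas you work directly with the simple-eigenvalue perturbation formula for $\Delta(\lambda_1^2)$.

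There is, however, a genuine gap in your treatment of the degenerate case $\lambda_1=\lambda_2$. Your claim that replacing $M$ by $M+\epsilon$ makes $\lambda_1=M+\epsilon>\lambda_2$ ``automatic'' at the first contact point is false: nothing prevents both singular values from reaching $M+\epsilon$ simultaneously. Likewise ``handled by continuity'' is not an argument, since $\lambda_1^2$ fails to be $C^2$ exactly where $\lambda_1=\lambda_2$, so your eigenvalue-perturbation formula for $\Delta\varphi$ is unavailable there. The paper deals with this explicitly as a separate case: when $\lambda_1=\lambda_2$ the additional first-order conditions force $u^1_{1i}=u^2_{2i}=0$, and then the null-vector quantity is either $0$ (if $u_{11}^2=u_{22}^1=0$) or $+\infty$ (otherwise, since the supremum over $\Gamma$ is of a nonconstant linear function). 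Either complete your argument with an honest analysis of this case, or appeal to the tensor maximum principle, which is designed precisely to absorb this degeneracy.
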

\begin{proof}
We will show $h\ge m^{2}\delta$ is preserved using \cite[Theorem 3.2]{andrews-pinch},
a refinement of Hamilton's maximum principle for tensors. The argument
for the upper bound $h\le M^{2}\delta$ is very similar but easier,
since there the negative-definite term in (\ref{eq:reaction-N}) below
is a help rather than a hindrance. Let $S=h-m^{2}\delta$ so that
we are trying to preserve the non-negative definiteness of $S$. Then
after some computation we arrive at the evolution equation
\[
\partial_{t}S_{ij}=F\Delta S_{ij}+N_{ij}
\]
where: 
\begin{equation}
N_{ij}=-2F\left(u_{ki}^{\alpha}u_{kj}^{\alpha}+\frac{2u_{k(i}^{k}u_{j)}^{\alpha}\Delta u^{\alpha}}{\lambda_{1}+\lambda_{2}}\right)\label{eq:reaction-N}
\end{equation}

If $S\ge0$ and $S(p)(v,v)=0$, we can choose Cartesian coordinates
for the domain and target so that $v=\partial_{1}$ and $Du=\text{diag}\left(\lambda_{1},\lambda_{2}\right)$
with $\lambda_{2}\ge\lambda_{1}=m$. Then at $p$ we have $S=h-\lambda_{1}^{2}\delta=\text{diag}\left(0,\lambda_{2}^{2}-\lambda_{1}^{2}\right)$;
and since $h(\partial_{1},\partial_{1})$ is at a spatial minimum
we have $\partial_{i}h_{11}=2\lambda_{1}u_{1i}^{1}=0$ and thus $u_{1i}^{1}=0$.
The other derivatives $\partial_{i}S=\partial_{i}h$ are given at
$p$ by:
\[
\partial_{1}S=\left(\begin{array}{cc}
0 & \lambda_{2}u_{11}^{2}\\
\lambda_{2}u_{11}^{2} & 2\lambda_{2}u_{12}^{2}
\end{array}\right)\qquad\partial_{2}S=\left(\begin{array}{cc}
0 & \lambda_{1}u_{22}^{1}+\lambda_{2}u_{12}^{2}\\
\lambda_{1}u_{22}^{1}+\lambda_{2}u_{12}^{2} & 2\lambda_{2}u_{22}^{2}
\end{array}\right)
\]
To preserve the inequality it suffices to show that 
\[
Q=N_{11}+2F\sup_{\Gamma\in\mathbb{R}^{2\times2}}\left(2\Gamma_{k}^{2}\partial_{k}S_{12}-\Gamma_{k}^{2}\Gamma_{k}^{2}S_{22}\right)\ge0
\]
at this point\footnote{The supremum term here comes from noting that at $p$, the scalar
function $x\mapsto S(V_{x},V_{x})$ must be at a minimum for \emph{any
}vector field $V$ extending $v$. Choosing $V=v+\Gamma x$ and computing
the evolution equation of $S(V,V)$ rather than $S(v,v)=S_{11}$ gives
the extra term.}.
\begin{casenv}
\item If $\lambda_{1}=\lambda_{2}$ then $S_{22}=0$ and $h(\partial_{2},\partial_{2}$)
is also at a spatial minimum, so we also have $u_{2i}^{2}=0$. If
$u_{11}^{2}=u_{22}^{1}=0$ then the supremum and the reaction term
$N_{11}$ are both zero. Otherwise, we are taking the supremum of
a non-constant linear function and thus get $Q=\infty\ge0$. 
\item If $\lambda_{1}\ne\lambda_{2}$, $S_{22}=\lambda_{2}^{2}-\lambda_{1}^{2}$
is positive, so the expression being maximised is a quadratic polynomial
in $\Gamma_{1}^{2}$ and $\Gamma_{2}^{2}$ with negative leading coefficients.
Thus it achieves a unique maximum at\footnote{Remark: this optimal $\Gamma$ is achieved at the singular vector
field $V=v+\Gamma x=e_{1}+O(|x|^{2})$; so the $Q$ we get in this
case is actually $P(\lambda_{1}^{2})$.} $\Gamma_{k}^{2}=\partial_{k}S_{12}/S_{22}$, where it is equal to:
\[
\frac{\left(\partial_{1}S_{12}\right)^{2}+\left(\partial_{2}S_{12}\right)^{2}}{S_{22}}=\frac{\lambda_{2}^{2}u_{11}^{2}u_{11}^{2}+\lambda_{1}^{2}u_{22}^{1}u_{22}^{1}+\lambda_{2}^{2}u_{12}^{2}u_{12}^{2}+2\lambda_{1}\lambda_{2}u_{22}^{1}u_{12}^{2}}{\lambda_{2}^{2}-\lambda_{1}^{2}}
\]
Combining this with $N_{11}$ we get some nice cancellations resulting
in
\[
Q=\frac{2\lambda_{1}^{2}F}{\lambda_{2}^{2}-\lambda_{1}^{2}}\left(\left(u_{11}^{2}\right)^{2}+\left(u_{12}^{2}+u_{22}^{1}\right)^{2}\right)\ge0
\]
as required.
\end{casenv}
\end{proof}
\begin{rem*}
There are other choices of $F$ (or anisotropic flows $\partial_{t}u=a^{ij}\left(Du\right)\nabla_{i}\nabla_{j}u$)
satisfying this result - rather than requiring perfect cancellation
of the $u_{22}^{1}u_{12}^{2}$ terms between $N_{11}$ and the $\Gamma$
contribution to form the perfect square, we could be a little more
permissive and require some differential inequality on the coefficients
that leads to the desired definiteness. We will not discuss these
other flows, however, as the regularity estimate in the next section
works only for (\ref{eq:main-flow}).
\end{rem*}
Since the domain manifold is compact and the initial data is a diffeomorphism,
we know there must be \emph{some }$m,M$ for which these bounds hold.
To keep things simple from here on out, we will consolidate these
persistent derivative bounds into a single constant:
\begin{cor}
There is some $\Lambda>0$ depending only on the initial data $u_{0}$
such that $\lambda_{1},\lambda_{2},\left|Du\right|,\lambda_{1}+\lambda_{2}$
are all in $\left[\Lambda^{-1},\Lambda\right]$ for all time.
\end{cor}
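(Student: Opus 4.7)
The plan is to obtain initial two-sided bounds $m \le \lambda_i(u_0) \le M$ from compactness, apply Proposition~\ref{prop:c1-estimate} to propagate them, and then repackage every relevant quantity into a single constant.

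First I would observe that since $u_0 \in \Diff(M,N)$ is smooth and orientation-preserving and $M$ is compact, the singular values $\lambda_1(u_0), \lambda_2(u_0)$ are continuous strictly positive functions on $M$ (strict positivity is equivalent to invertibility of $Du_0$). Hence they attain a strictly positive minimum $m$ and a finite maximum $M$ on $M$, giving $m \le \lambda_i(u_0) \le M$ pointwise.

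Next I would apply Proposition~\ref{prop:c1-estimate} with these $m, M$ to deduce that the same two-sided bound persists for the solution $u(\cdot,t)$ for all times $t \ge 0$ at which the flow is defined. The algebraic identities $|Du|^2 = \lambda_1^2 + \lambda_2^2$ and $\lambda_1\lambda_2 = |\det Du|$ immediately yield $|Du| \in [\sqrt{2}\,m, \sqrt{2}\,M]$ and $\lambda_1 + \lambda_2 \in [2m, 2M]$, so each of the four quantities listed is controlled both above and below by positive constants depending only on $u_0$.

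Finally I would choose
\[
\Lambda = \max\bigl\{ 2M,\ m^{-1}\bigr\}
\]
(which dominates $\sqrt{2}\,M$, $M$, $2M$ above and $(\sqrt{2}\,m)^{-1}$, $(2m)^{-1}$ below), so that all of $\lambda_1, \lambda_2, |Du|, \lambda_1+\lambda_2$ lie in $[\Lambda^{-1}, \Lambda]$ for all time. There is no real obstacle here; the only point to watch is that $m > 0$, which uses that $u_0$ is a diffeomorphism and $M$ is compact — without compactness one could only assert positivity pointwise, not a uniform positive lower bound.
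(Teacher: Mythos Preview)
Your proposal is correct and follows exactly the line the paper takes: the paper remarks just before the corollary that compactness of $M$ and the diffeomorphism hypothesis on $u_0$ guarantee some initial $m,M$ with $m\le\lambda_i\le M$, and then simply consolidates the bounds from Proposition~\ref{prop:c1-estimate} into a single $\Lambda$ without spelling out the algebra. You have supplied the details (the explicit ranges for $|Du|$ and $\lambda_1+\lambda_2$ and a concrete choice of $\Lambda$) that the paper leaves implicit, but the argument is the same.
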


\section{Regularity Estimate}

Since we are dealing with a quasilinear PDE system, there is very
little general theory available; so to get a H\"{o}lder estimate on
the coefficient $F$ we will need to exploit the particular form of
our system. To get this estimate we need to define some new quantities:
fix Cartesian coordinates on both the domain and target, and let $\theta\in S^{1}$
and $r>0$ be defined by
\begin{eqnarray*}
r\cos\theta & = & u_{1}^{1}+u_{2}^{2},\\
r\sin\theta & = & u_{1}^{2}-u_{2}^{1}.
\end{eqnarray*}
One can check that $r=\lambda_{1}+\lambda_{2}$ and $\theta$ is the
``rotational component'' of $Du$; that is, the angle between the
singular frames $e$ and $v$. Since $F=r^{-2}$ and Prop \ref{prop:c1-estimate}
gives us time-independent bounds above and below on $r$, to get a
$C^{\alpha}$ estimate for $F$ it suffices to get one for $r$. This
is our goal for this section. We start with the result that explains
why we are interested in $\theta$ along with $r$:
\begin{lem}
The evolution of the quantities $r,\theta$ forms a closed system
of PDE given by:\footnote{$Dr\times D\theta$ denotes the 2D cross product $\partial_{1}r\,\partial_{2}\theta-\partial_{2}r\,\partial_{1}\theta$.}
\begin{eqnarray*}
Pr & = & -\frac{\left|D\theta\right|^{2}}{r}-\frac{2\left|Dr\right|^{2}}{r^{3}}+\frac{Dr\times D\theta}{r^{2}}\\
P\theta & = & 0
\end{eqnarray*}
\end{lem}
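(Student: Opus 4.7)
The plan is to reduce the computation of $Pr$ and $P\theta$ to that of $Pa$ and $Pb$, where $a = u^1_1 + u^2_2$ and $b = u^2_1 - u^1_2$, and then convert via the polar identity $a + ib = re^{i\theta}$.

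Since each component $u^\alpha$ satisfies $Pu^\alpha = 0$, differentiating in $x^i$ (and using $[\partial_i,\Delta] = 0$) gives $Pu^\alpha_i = (\partial_i F)\Delta u^\alpha$. With $F = r^{-2}$, so $\partial_i F = -2r^{-3}\partial_i r$, this produces
\[ Pa = -\tfrac{2}{r^3}\bigl[(\partial_1 r)\Delta u^1 + (\partial_2 r)\Delta u^2\bigr], \qquad Pb = -\tfrac{2}{r^3}\bigl[(\partial_1 r)\Delta u^2 - (\partial_2 r)\Delta u^1\bigr]. \]
To close the system I want to remove the $\Delta u^\alpha$ factors. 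A direct expansion of $\partial_i a$ and $\partial_i b$ in second derivatives of $u$ yields the Cauchy--Riemann-type identities
\[ \Delta u^1 = \partial_1 a - \partial_2 b, \qquad \Delta u^2 = \partial_2 a + \partial_1 b, \]
which in complex form are nothing but $\Delta u = 4\partial_{\bar z}\partial_z u$ with $2u_z = a+ib$.

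Combined with the polar identities $a\partial_i a + b\partial_i b = r\partial_i r$ and $a\partial_i b - b\partial_i a = r^2\partial_i\theta$, these let me rewrite the quadratic combinations $a\Delta u^1 + b\Delta u^2$ and $a\Delta u^2 - b\Delta u^1$ purely in terms of $r$, $\theta$ and their first derivatives, and hence express both $aPa + bPb$ and $aPb - bPa$ as algebraic expressions in $|Dr|^2$, $\langle Dr, D\theta\rangle$ and $Dr\times D\theta$.

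Finally I translate back to $(r,\theta)$. Applying the Leibniz rule $P(fg) = fPg + gPf - 2F\langle Df, Dg\rangle$ and the chain rule $Pf(\phi) = f'(\phi) P\phi - F f''(\phi) |D\phi|^2$ to $Pa = P(r\cos\theta)$ and $Pb = P(r\sin\theta)$ gives the $(r,\theta)$-side expansion of these same combinations: $aPa + bPb$ isolates $Pr$ (the $P\theta$ terms cancel by symmetry), while $aPb - bPa$ isolates $r^2 P\theta - 2Fr\langle Dr, D\theta\rangle$. Equating the two expressions for each combination yields the stated evolution equations. The work is essentially bookkeeping; the main obstacle is ensuring that all remaining second-order derivatives of $u$ disappear from the right-hand sides. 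The decisive cancellation is in the antisymmetric combination, where the $\langle Dr, D\theta\rangle$ terms coming from the Leibniz/chain rule precisely match the contribution from the diffusion coefficient, producing the clean conclusion $P\theta = 0$.
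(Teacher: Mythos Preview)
Your approach is correct and is essentially the same as the paper's, which simply says to differentiate the flow to get the evolution of $Du$, compute the evolution of $r,\theta$, and simplify. You have organised that computation nicely: the complex substitution $a+ib=re^{i\theta}$ together with the Cauchy--Riemann identity $\Delta u^{1}+i\Delta u^{2}=(\partial_{1}-i\partial_{2})(a+ib)$ is exactly the structure that makes the ``lot of algebraic simplification'' tractable, and your symmetric/antisymmetric combinations $aPa+bPb$ and $aPb-bPa$ are the natural way to isolate $Pr$ and $P\theta$.
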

\begin{proof}
As with the previous evolution equations, differentiate (\ref{eq:main-flow})
to get the evolution of $Du$, use this to find the evolution of $r,\theta$
and do a lot of algebraic simplification. This result relies strongly
on the particular $F$ we have chosen, and is related to the fact
that our choice of $F$ makes the flow preserve maps with symmetric
$Du$.
\end{proof}
The first thing we notice is that the evolution of $\theta$ decouples
entirely - it satisfies a uniformly parabolic PDE in general form,
so we can apply the standard Krylov-Safanov estimate \cite[Corollary 7.41]{Lieberman-book}:
\begin{cor}
$\theta$ belongs to some parabolic H\"{o}lder space, with exponent
and norm depending only on $\Lambda$.
\end{cor}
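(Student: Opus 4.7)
The equation $P\theta=0$ unpacks to $\partial_t\theta - F(Du)\,\Delta\theta = 0$, which is a \emph{linear} scalar parabolic equation in non-divergence form for $\theta$, with leading coefficient $F = r^{-2}$ multiplying the identity. My plan is simply to verify the hypotheses of the Krylov--Safonov parabolic Hölder estimate cited as \cite[Corollary 7.41]{Lieberman-book} and read off the conclusion.

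First, by the corollary to Proposition \ref{prop:c1-estimate} we have $r = \lambda_1+\lambda_2 \in [\Lambda^{-1},\Lambda]$ for all time, so $F = r^{-2}$ lies in $[\Lambda^{-2},\Lambda^2]$. Hence the operator $\partial_t - F\Delta$ is uniformly parabolic with ellipticity constants depending only on $\Lambda$. The coefficient $F$ is \emph{a priori} only a bounded measurable function of $(x,t)$ (it depends on $Du$, whose Hölder regularity we do not yet control), but this is exactly the setting for which Krylov--Safonov was designed: no regularity of the coefficient is required beyond measurability and the two-sided ellipticity bound. Since there are no lower-order terms or inhomogeneity, the theorem yields an interior parabolic Hölder estimate for $\theta$ with exponent $\alpha$ and constant depending only on $\Lambda$ and the local oscillation of $\theta$.

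The only minor subtlety is that $\theta$ takes values in $S^1$, while Krylov--Safonov is stated for real-valued solutions. This is handled by working on the universal cover $\mathbb{R}^2$: since $u$ lifts smoothly, the formulas $r\cos\theta = u^1_1+u^2_2$, $r\sin\theta = u^2_1 - u^1_2$ together with continuity in $(x,t)$ uniquely determine a continuous $\mathbb{R}$-valued lift of $\theta$, which satisfies the same equation and is bounded on any compact parabolic cylinder. Applying the interior estimate on a family of such cylinders covering $M\times[\tau,\infty)$ and using the $\Gamma_1$-equivariance, we get the stated uniform parabolic Hölder bound for $\theta$ on $M$. There is no real obstacle here — the content of the corollary is precisely that the evolution of $\theta$, unlike that of $r$, has no source term and no dependence on the rest of the system, so Krylov--Safonov applies verbatim.
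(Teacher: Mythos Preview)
Your proof is correct and follows exactly the paper's approach: apply the Krylov--Safonov estimate to the decoupled equation $P\theta=0$, using the two-sided bound on $r=\lambda_1+\lambda_2$ from Proposition~\ref{prop:c1-estimate} to get uniform parabolicity. Your treatment is in fact more careful than the paper's one-line justification, since you explicitly address the $S^1$-valuedness by lifting to the universal cover.
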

Our estimate for $r$ is inspired by the general H\"{o}lder gradient
estimate for quasilinear PDE (see Lieberman \cite[Theorem 12.3]{Lieberman-book},
originally due to Ladyzhenskaya and Uraltseva \cite{LU1}) which perturbs
$\partial_{i}u$ by $\left|du\right|^{2}$ in order to obtain super-
and subsolutions of a divergence-form equation, allowing the application
of a weak Harnack inequality. We will instead perturb $r$ by $\theta$,
with the H\"{o}lder continuity of $\theta$ being key to getting the
same regularity for $r$.
\begin{notation}
\noindent For $X_{0}=\left(x_{0},t_{0}\right)\in\mathbb{T}^{2}\times[0,T)$
we define the parabolic neighbourhood 
\[
Q\left(X_{0},R\right)=\left\{ \left(x,t\right)\in M\times[0,T)\co d\left(x,x_{0}\right)<R,\left|t-t_{0}\right|<R^{2},t<t_{0}\right\} .
\]
For $\alpha\in(0,1]$, the parabolic $\alpha$-H\"{o}lder space is
then defined by the norm 
\[
\left\Vert f\right\Vert _{0;\alpha}=\sup\left|f\right|+\sup\left\{ \frac{\osc_{Q\left(X,R\right)}f}{R^{\alpha}}\middle|X\in M\times[0,T),R>0\right\} .
\]

\end{notation}
We will denote averages by
\[
\fint_{E}f=\frac{1}{\mu\left(E\right)}\int_{E}f\,d\mu
\]
where $d\mu=dA\,dt$ is the standard measure on $M\times[0,T)$, and
often use the shorthand $\sup_{R}=\sup_{Q\left(X_{0},R\right)}$ (similarly
$\inf_{R},\osc_{R}$) since almost all the neighbourhoods we are working
with in this estimate have a common centre. The relation $A\lesssim B$
means $A\le CB$ for some positive constant $C$ depending only on
$\Lambda$.
\begin{prop}
The diffusion coefficient $F\left(x,t\right)=\left(\lambda_{1}\left(x,t\right)+\lambda_{2}\left(x,t\right)\right)^{-2}$
is bounded in the parabolic H\"{o}lder space over $M\times[0,T)$,
with norm depending only on $\Lambda$.\end{prop}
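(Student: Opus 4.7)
Since the corollary gives $\Lambda^{-1}\le r\le\Lambda$ and $F=r^{-2}$, it suffices to prove a parabolic H\"{o}lder bound for $r$ depending only on $\Lambda$. The strategy is to recast the equation for $r$ in linear divergence form with bounded measurable coefficients, so that parabolic De Giorgi--Nash--Moser theory (based on the weak Harnack inequality) yields oscillation decay, with the source term controlled by the H\"{o}lder regularity of $\theta$ already established in the preceding corollary.

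The key algebraic step is that the precise choice $F=r^{-2}$ makes the right-hand side of the $Pr$-equation a divergence. The identity $r^{-2}\Delta r - 2|Dr|^2/r^3 = \partial_i(r^{-2}\partial_i r)$ handles two of the three terms, and a short calculation gives $(Dr\times D\theta)/r^2 = -\partial_i(\theta J_{ij}\partial_j r/r^2)$---the ``spurious'' contributions $J_{ij}\partial_i\partial_j r$ and $J_{ij}(\partial_i r)(\partial_j r)$ vanish by antisymmetry of the constant rotation matrix $J$. Combining these,
\[
\partial_t r - \partial_i\!\left(\frac{\delta_{ij} - \theta J_{ij}}{r^2}\,\partial_j r\right) = -\frac{|D\theta|^2}{r},
\]
a linear uniformly parabolic divergence-form equation whose coefficient has symmetric part $r^{-2}\delta_{ij}$ (uniformly elliptic in $[\Lambda^{-2},\Lambda^2]$) and bounded antisymmetric part $-\theta J_{ij}/r^2$, with nonpositive source.

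Parabolic De Giorgi--Nash--Moser theory (e.g.\ Lieberman Ch.~6--7) now yields oscillation decay for $r$ provided the source $|D\theta|^2/r$ lies in a suitably integrable class. Since $\theta\in C^\alpha$ with norm depending only on $\Lambda$, a Caccioppoli-type energy inequality on parabolic cylinders gives $\int_{Q_R}|D\theta|^2 \lesssim R^{n+2\alpha}$, so the source sits in a sub-critical parabolic Morrey space. The resulting oscillation-decay inequality $\osc_{Q_{R/2}} r \le \gamma \osc_{Q_R} r + CR^\alpha$ (for some $\gamma<1$), iterated over dyadic cylinders, produces $r\in C^\beta$ for some $\beta\in(0,\alpha)$ with norm controlled by $\Lambda$, and hence the same for $F=r^{-2}$.

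The main obstacle is the Caccioppoli estimate for $\theta$: since $P\theta=0$ is in non-divergence form with only bounded measurable coefficient $F$, testing against $\theta\eta^2$ produces an uncontrollable $\partial F$ term. We circumvent this by first rewriting $\theta$'s equation as $\partial_t\theta - \partial_i(r^{-2}\partial_i\theta) = 2r^{-3}Dr\cdot D\theta$ and then performing a coupled energy estimate for $(r,\theta)$ that absorbs the $Dr\cdot D\theta$ drift using the sink $-2|Dr|^2/r^3$ already present on the right of the $Pr$-equation. Equivalently, as foreshadowed in the paragraph before the proposition, one can work with the perturbations $w_\pm = r\pm C\theta$ for small $C$ depending only on $\Lambda$, to which the weak Harnack inequality for the divergence-form equation above applies directly.
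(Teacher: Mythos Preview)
Your divergence-form rewriting of the $r$-equation is correct and elegant, and you have put your finger on exactly the right obstacle: the Morrey-type bound $\int_{Q_R}|D\theta|^2\lesssim R^{2+2\alpha}$ does not follow from Krylov--Safonov alone, because $P\theta=0$ is genuinely non-divergence and a Caccioppoli inequality is not available without regularity of the coefficient $r^{-2}$. The problem is that neither of your proposed workarounds closes this gap. Your option (c) fails outright: with the cross term $(Dr\times D\theta)/r^2$ absorbed into the antisymmetric coefficient, one has $L_d r=-|D\theta|^2/r$ and $L_d\theta=2r^{-3}\langle Dr,D\theta\rangle$, so
\[
L_d(r\pm C\theta)=-\tfrac{1}{r}|D\theta|^2\pm 2Cr^{-3}\langle Dr,D\theta\rangle,
\]
and there is \emph{no} $|Dr|^2$ term available to absorb the drift; no choice of small $C$ makes this sign-definite. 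Your option (b) is only a sketch, and if one tries to run the coupled Caccioppoli estimate one finds the same circularity: testing the $r$-equation produces $\int|D\theta|^2$ on the right with an $O(1)$ (not $o(1)$) constant, so it cannot be absorbed back into the $\theta$-estimate.

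The paper avoids the Morrey bound on $|D\theta|^2$ entirely by working with the \emph{symmetric} divergence operator $L_d f=\partial_t f-\operatorname{div}(r^{-2}Df)$ and building explicit barriers. The subsolution direction is easy: $L_d r^2\le0$ after one application of Young. For the supersolution the paper takes $w=r^{p}-c\phi^{2}$ with $\phi=\theta-\theta(X_0)$; the two crucial mechanisms are that the power $p<1$ manufactures a \emph{positive} $p(1-p)r^{p-4}|Dr|^2$ term in $L_d r^{p}$, and the \emph{square} $\phi^2$ (not $\phi$) contributes $+2cr^{-2}|D\theta|^2$. With both good quadratic terms present, all cross terms can be absorbed provided $|\phi|<\delta$, which holds on small cylinders by the already-established H\"older bound on $\theta$; the choice $p<\tfrac12$ leaves enough room. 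Weak Harnack applied to $\sup r^2-r^2$ and $w-\inf w$ then yields the oscillation decay with error $\sup\phi^2\lesssim R^{\beta}$, and iteration finishes. In short, the perturbation ``by $\theta$'' foreshadowed before the proposition is $r^{p}-c\phi^{2}$, not $r\pm C\theta$; the square and the sub-unit power are what make the argument close.
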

\begin{proof}
This is a local estimate, so fix a point $X_{0}=\left(x_{0},t_{0}\right)$
now and let $\phi=\theta-\theta\left(X_{0}\right)$ (so $D\theta=D\phi$).
Since we know $r\in\left[\Lambda^{-1},\Lambda\right]$, oscillations
of $r^{q}$ are comparable to those of $r$ for any $q>0$: to be
precise we have 
\begin{equation}
\frac{\left|s^{q}-r^{q}\right|}{\left|s-r\right|}\in\left[q\Lambda^{-\left|q-1\right|},q\Lambda^{\left|q-1\right|}\right]\label{eq:exponent-comparability}
\end{equation}
for all $r,s\in\left[\Lambda^{-1},\Lambda\right]$. Thus we can take
powers of $r$ to get more favourable evolution equations: we will
work with the two functions $r^{2}$ and $w=r^{p}-c\phi^{2}$, where
$p$ and $c$ will be chosen later to make $w$ a supersolution. Letting
\[
L_{d}\co f\mapsto\partial_{t}f-\text{div}\left(r^{-2}Df\right)
\]
be our associated linear divergence-form operator, we compute
\begin{align*}
L_{d}r^{2} & =2r^{-1}Dr\times D\theta-2\left|D\theta\right|^{2}-2r^{-2}\left|Dr\right|^{2}\\
L_{d}w & =pr^{p-2}\left(r^{-1}Dr\times D\theta-\left|D\theta\right|^{2}+\left(1-p\right)r^{-2}\left|Dr\right|^{2}\right)\\
 & +2cr^{-2}\left|D\theta\right|^{2}-4c\phi r^{-3}\left\langle Dr,D\theta\right\rangle .
\end{align*}
The Cauchy--Schwarz and Peter--Paul inequalities allow us to estimate
\[
\left|Dr\times D\theta\right|,\left|\left\langle Dr,D\theta\right\rangle \right|\le\frac{1}{2r}\left|Dr^{2}\right|+\frac{r}{2}\left|D\theta\right|^{2}
\]
and thus we immediately get 
\[
L_{d}r^{2}\le-r^{-2}\left|Dr\right|^{2}-\left|D\theta\right|^{2}\le0.
\]
Getting a supersolution is more difficult: using the same estimates
for the cross terms we have
\begin{align*}
L_{d}\left(r^{p}-c\phi^{2}\right) & \ge\left(-\frac{1}{2}pr^{p-4}+p\left(1-p\right)r^{p-4}-2c\phi r^{-4}\right)\left|Dr\right|^{2}\\
 & +\left(-\frac{1}{2}pr^{p-2}-pr^{p-2}+2cr^{-2}-2c\phi r^{-2}\right)\left|D\theta\right|^{2}.
\end{align*}
For this to be non-negative we need the two inequalities
\begin{align*}
p\left(\frac{1}{2}-p\right)r^{p}-2c\phi & \ge0\\
\tag*{and}2c\left(1-\phi\right)-\frac{3}{2}pr^{p} & \ge0.
\end{align*}
In particular these are both satisfied if $\left|\phi\right|<\delta<1$
and 
\[
\frac{3}{4}\frac{p\Lambda^{p}}{1-\delta}\le c\le\frac{p\left(\frac{1}{2}-p\right)}{2\delta}\Lambda^{-p},
\]
and we can choose such a $c$ if and only if 
\[
\frac{3\delta}{1-\delta}\Lambda^{2p}\le1-2p.
\]
Thus taking $\delta<1/4$ we can satisfy this with some $p\in(0,\frac{1}{2})$.
By the uniform H\"{o}lder continuity of $\theta$ we can choose $R_{0}$
depending only on $\Lambda$ so that $\left|\phi\right|<\delta$ on
$Q\left(X_{0},4R_{0}\right)$, and thus we have $L_{d}r^{2}\le0$
and $L_{d}w\ge0$ on this set. Now let $R\in\left(0,R_{0}\right)$
be arbitrary and apply the Weak Harnack estimate \cite[Theorem 6.18]{Lieberman-book}
to the non-negative supersolutions $\sup_{4R}r^{2}-r^{2}$ and $w-\inf_{4R}w$
on $Q\left(4R\right)$, which produces estimates
\begin{align}
\fint_{\Theta\left(R\right)}\left[\sup_{4R}r^{2}-r^{2}\right] & \le C\left(\sup_{4R}r^{2}-\sup_{R}r^{2}\right)\label{eq:harnack-application-1}\\
\fint_{\Theta\left(R\right)}\left[\left(r^{p}-c\phi^{2}\right)-\inf_{4R}\left(r^{p}-c\phi^{2}\right)\right] & \le C\left(\inf_{R}\left(r^{p}-c\phi^{2}\right)-\inf_{4R}\left(r^{p}-c\phi^{2}\right)\right),\label{eq:harnack-application-2}
\end{align}
where $\Theta\left(R\right)=Q\left(\left(x_{0},t_{0}-4R^{2}\right),R\right)\subset Q\left(X_{0},4R\right)$
covers the same spatial region as $Q\left(R\right)$ but is disjoint
in time and $C>0$ depends only on $\Lambda$. Using (\ref{eq:exponent-comparability})
we get:
\begin{gather*}
\fint_{\Theta\left(R\right)}\left[\sup_{4R}r^{2}-r^{2}\right]\ge2\Lambda^{-1}\fint_{\Theta\left(R\right)}\left[\sup_{4R}r-r\right]\\
\fint_{\Theta\left(R\right)}\left[\left(r^{p}-c\phi^{2}\right)-\inf_{4R}\left(r^{p}-c\phi^{2}\right)\right]\ge p\Lambda^{p-1}\fint_{\Theta\left(R\right)}\left[r-\inf_{4R}r\right]
\end{gather*}
Similarly on the other side we have:
\begin{gather*}
\sup_{4R}r^{2}-\sup_{R}r^{2}\le2\Lambda\left(\sup_{4R}r-\sup_{R}r\right)\\
\inf_{R}\left(r^{p}-c\phi^{2}\right)-\inf_{4R}\left(r^{p}-c\phi^{2}\right)\le p\Lambda^{1-p}\left(\inf_{R}r-\inf_{4R}r\right)+\sup_{4R}c\phi^{2}
\end{gather*}
Using these estimates in (\ref{eq:harnack-application-1}) and (\ref{eq:harnack-application-2})
and multiplying by appropriate factors yields 
\begin{gather*}
\fint_{\Theta\left(R\right)}\left[\sup_{4R}r-r\right]\le C\Lambda^{2}\left(\sup_{4R}r-\sup_{R}r\right)\\
\tag*{and}\fint_{\Theta\left(R\right)}\left[r-\inf_{4R}r\right]\le C\Lambda^{2-2p}\left(\inf_{R}r-\inf_{4R}r\right)+\frac{1}{p}\Lambda^{1-p}\sup c\phi^{2}.
\end{gather*}
Noting $\Lambda^{-2p}\le1$ and adding these together we find 
\[
\osc_{4R}r\le C_{1}(\osc_{4R}r-\osc_{R}r+\sup_{4R}\phi^{2})
\]
where the new constant $C_{1}=\max\left(C\Lambda^{2},\frac{c}{p}\Lambda^{1-p},1\right)$
depends only on $\Lambda$. Writing this as
\[
\osc_{R}r\le\left(1-C_{1}^{-1}\right)\osc_{4R}r+\sup_{4R}\phi^{2}
\]
and applying Lemma 8.23 of Gilbarg--Trudinger \cite{GT-EPDE} with
$\sigma\left(R\right)=\sup_{R}\phi^{2}$ yields the estimate
\[
\osc_{R}r\le C_{2}\left(\left(\frac{R}{R_{0}}\right)^{\alpha}\osc_{R_{0}}r+\sup_{R^{\mu}R_{0}^{1-\mu}}\phi^{2}\right)
\]
where $C_{2}>0$ and $\alpha\in\left(0,1\right)$ depend only on $\Lambda$
and a freely chosen $\mu\in\left(0,1\right)$. The H\"{o}lder bound
on $\theta$ implies $\sup_{\rho}\phi^{2}\lesssim\rho^{\beta}$ for
some $\beta\in\left(0,1\right)$, and thus we have (for $R<R_{0}$)
\[
\osc_{R}r\lesssim R^{\alpha}+R^{\mu\beta}\lesssim R^{\min\left(\alpha,\mu\beta\right)},
\]
i.e. a H\"{o}lder bound on $r$ with constant and exponent depending
only on $\Lambda$ (since $\beta$ and $R_{0}$ depend only on $\Lambda$).
Since $F=r^{-2}$ and we already know $r$ is restricted to $\left[\Lambda^{-1},\Lambda\right]$,
we get the same result for $X\mapsto F\left(Du\left(X\right)\right)$.
\end{proof}
A standard bootstrap argument using the parabolic Schauder estimate
now gives $C^{\alpha}$ regularity of all derivatives; so we get the
estimate we need for long-time existence:
\begin{cor}
All derivatives of $u$ are bounded on $M\times[0,T)$.\label{cor:longtime}\end{cor}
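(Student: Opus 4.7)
The plan is a standard quasilinear bootstrap using the parabolic Schauder estimate, with the previous proposition as the base of the induction. Because $\lambda_{1}+\lambda_{2}\in[\Lambda^{-1},\Lambda]$, the system \eqref{eq:main-flow} can be read componentwise as a \emph{linear} uniformly parabolic scalar equation
\[
\partial_{t}u^{i}-a(x,t)\,\Delta u^{i}=0,
\]
with $a(x,t)=F(Du(x,t))$ controlled in $C^{\alpha}(M\times[0,T))$ by $\Lambda$ alone (this is exactly the previous proposition). The interior parabolic Schauder estimate (e.g.\ Lieberman Ch.~IV) applied componentwise, together with the corresponding initial-value Schauder estimate at $t=0$ — available here because $M$ is compact and boundaryless and $u_{0}$ is smooth, so no compatibility conditions need to be verified — yields a uniform $C^{2+\alpha,1+\alpha/2}$ bound on $u$ over the full cylinder $M\times[0,T)$, with constant depending only on $\Lambda$ and $\|u_{0}\|_{C^{2+\alpha}}$.

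With the base case in hand the iteration is routine. Differentiating \eqref{eq:main-flow} in a spatial direction gives, for each component $u^{i}_{\ell}$, a linear parabolic equation of the schematic form
\[
P\,u^{i}_{\ell}=\Phi(Du)\cdot D^{2}u,
\]
with $\Phi$ smooth in its argument and $P=\partial_{t}-F(Du)\Delta$ the same operator introduced in the notation section. Assume inductively that $u$ enjoys a uniform $C^{k+\alpha,(k+\alpha)/2}$ bound on $M\times[0,T)$: then the coefficient $F(Du)$ and the right-hand side of the differentiated equation both lie in parabolic $C^{k-1+\alpha}$, and a further application of Schauder (interior plus initial value as before) promotes $u^{i}_{\ell}$ to $C^{k+\alpha}$, i.e.\ $u\in C^{k+1+\alpha}$. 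Iterating on $k$ produces uniform bounds on derivatives of $u$ of every order over $M\times[0,T)$, which is the claim.

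The essential content has already been supplied by the previous section in the form of the $C^{\alpha}$ bound on $F(Du)$; everything after that is bookkeeping. The one step that requires a moment's thought is the behaviour at $t=0$, where purely interior Schauder would only give $\tau$-dependent bounds on $M\times[\tau,T)$. The smoothness of $u_{0}$ together with the absence of spatial boundary means the standard initial-value parabolic Schauder estimate applies with no extra hypotheses, supplying the missing uniform bound down to $t=0$; and because Schauder constants are local in time the final estimates are independent of $T$, which is what will be needed in the next section to continue the flow beyond any finite time.
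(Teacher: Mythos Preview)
Your argument is correct and is essentially the same Schauder bootstrap as the paper: obtain $C^{2;\alpha}$ control from the $C^{\alpha}$ bound on $F$, then differentiate and iterate. A minor slip --- with $u\in C^{k+\alpha}$ the right-hand side $\Phi(Du)\cdot D^{2}u$ lies only in $C^{k-2+\alpha}$, not $C^{k-1+\alpha}$ --- does not affect the induction step, and your explicit attention to the $t=0$ boundary is a detail the paper simply elides.
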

\begin{proof}
Repeatedly differentiating the evolution equation $\partial_{t}u=F\Delta u$
gives
\[
P\left(D^{k}u\right)=\partial_{t}D^{k}u-F\Delta D^{k}u=D^{k}F*D^{2}u+\cdots+DF*D^{k+1}u
\]
where $A*B$ denotes an arbitrary contraction of $A\otimes B$. By
our control on $Du$ we know that $\left\Vert F\right\Vert _{k;\alpha}\lesssim\left\Vert u\right\Vert _{k+1;\alpha}$,
so $\left\Vert P\left(D^{k}u\right)\right\Vert _{\alpha}\lesssim\left\Vert u\right\Vert _{k+1;\alpha}$.
Combining this with the Schauder estimate we see that $\left\Vert u\right\Vert _{k+1;\alpha}<\infty$
implies $\left\Vert u\right\Vert _{k+2;\alpha}<\infty$ for all $k\ge1$;
so once we have $C^{2;\alpha}$ control we get bounds on all derivatives.
Applying the Schauder estimate to the components of $u$ we see that
our $C^{\alpha}$ control of the coefficients implies this $C^{2;\alpha}$
control on $u$.
\end{proof}

\section{Existence and Convergence}

We come now to the proof of Theorem \ref{thm:main-theorem-T2}. The
first technicality to address is short-time existence, which is (as
is usually the case) fairly straightforward for our flow. 
\begin{lem}
For any $t_{0}\in\mathbb{R}$ and $u_{0}\in{\rm Diff}\left(M,N\right)$,
there exists some time $\epsilon>0$ and a smooth solution $u\co M\times[t_{0},t_{0}+\epsilon)\to N$
of $\partial_{t}u=F\Delta u$ such that $u\left(\cdot,t_{0}\right)=u_{0}$.\label{lem:short-time-existence}\end{lem}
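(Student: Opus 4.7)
The plan is to treat (\ref{eq:main-flow}) as a quasilinear parabolic system and apply a standard contraction-mapping argument in parabolic H\"older spaces, exploiting the fact that the equation is uniformly parabolic at the initial data. Since $u_0 \in \Diff(M,N)$ on a compact domain, the singular values of $Du_0$ are bounded above and away from zero, so $F(Du_0)$ is smooth, bounded, and uniformly positive, and this ellipticity persists under small $C^1$ perturbations.

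First I would pass to the universal covers and view $u$ as a $\Gamma_1$-periodic map $\mathbb{R}^2 \times [t_0, t_0+\epsilon) \to \mathbb{R}^2$ (modulo $\Gamma_2$), then write $u = u_0 + v$ with $v$ periodic and $v(\cdot, t_0) = 0$. Equation (\ref{eq:main-flow}) becomes
\[
\partial_t v - F(Du_0)\Delta v = F(D(u_0+v))\Delta u_0 + \left[F(D(u_0+v)) - F(Du_0)\right]\Delta v,
\]
which has the form $Lv = N(v)$ for a linear uniformly parabolic operator $L$ with smooth, time-independent coefficients, and a smooth nonlinearity $N$ whose $v$-dependence is of higher order in $Dv, D^2 v$ for small $v$.

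For the fixed-point step I would define $T(w) = v$, where $v$ solves $Lv = N(w)$ with $v(\cdot, t_0) = 0$, on a small closed ball about $0$ in a parabolic H\"older space $C^{2+\alpha,1+\alpha/2}$ of periodic functions vanishing at $t_0$. Linear parabolic Schauder theory makes $T$ a well-defined bounded operator on this space. Shrinking $\epsilon$ has two effects: it keeps $D(u_0 + w)$ inside the uniform-ellipticity region (via the time-factor $\epsilon^{\alpha/2}$ controlling $\|Dw\|_{C^0}$), and it contracts the Lipschitz constant of $T$, so iteration produces a unique fixed-point $v$ and hence a short-time solution $u = u_0 + v$.

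The main technical step, though standard, is the contraction estimate bounding $\|N(w_1) - N(w_2)\|_{C^\alpha}$ by $\epsilon^{\alpha/2}\|w_1 - w_2\|_{C^{2+\alpha,1+\alpha/2}}$ on the ball; this follows from smoothness of $F$ in the ellipticity region together with standard interpolation in parabolic H\"older norms. Full smoothness of the resulting $C^{2+\alpha,1+\alpha/2}$ solution then follows immediately from the same Schauder bootstrap used in Corollary~\ref{cor:longtime}.
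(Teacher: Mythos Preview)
Your argument is correct and is the standard direct route to short-time existence for quasilinear parabolic systems. The paper takes a shorter path: it simply verifies the hypotheses of an off-the-shelf local existence theorem for nonlinear parabolic systems (Baker's thesis), namely uniform parabolicity at $t=t_0$ (i.e.\ the Legendre--Hadamard condition, which here is just $F|_{t=0}\in[\Lambda^{-2},\Lambda^{2}]$) together with $C^1$ dependence of the coefficients. Because $F(A)=(|A|^2+2\det A)^{-1}$ is singular on a nontrivial subset of $\mathbb{R}^{2\times 2}$, the paper first modifies $F$ away from the image of $Du_0$ to make it globally $C^1$ before invoking the theorem; you achieve the same effect by restricting to the small $C^1$-neighbourhood of $Du_0$ on which $F$ is already smooth. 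Your approach is more self-contained and makes explicit where smallness of $\epsilon$ and of the ball radius are used, while the paper's is quicker but depends on an external reference; mathematically they amount to the same thing.
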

\begin{proof}
We refer the reader to Baker \cite[Main Theorem 1]{bakerthesis} for
a local existence theorem for solutions of nonlinear parabolic systems.
The conditions are easy to check - since the initial data is a diffeomorphism
of a compact manifold we have $\Lambda^{-2}\le F|_{t=0}\le\Lambda^{2}$,
so we immediately satisfy the Legendre-Hadamard condition and can
modify $F$ on $B\left(0,\Lambda^{-1}/2\right)$ (i.e. away from the
image of $Du$) to make it continuously differentiable without changing
the dynamics.
\end{proof}
We now have all the ingredients we need for the standard long-time
existence argument:
\begin{prop}
Given any $u_{0}\in{\rm Diff}(M,N)$, there is a smooth solution $u\co M\times[0,\infty)\to N$
of (\ref{eq:main-flow}) such that $u(\cdot,0)=u_{0}$ and each $u(\cdot,t)$
is a diffeomorphism.\label{prop:long-time-existence}\end{prop}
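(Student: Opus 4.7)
The plan is to combine the short-time existence Lemma \ref{lem:short-time-existence} with the a priori estimates established in the previous sections via a standard continuation argument. Let $T \in (0,\infty]$ be the supremum of times such that a smooth solution $u \co M \times [0,T) \to N$ of (\ref{eq:main-flow}) with $u(\cdot,0)=u_0$ exists and each $u(\cdot,t)$ is a diffeomorphism. Lemma \ref{lem:short-time-existence} guarantees $T > 0$, so we only need to rule out $T < \infty$.

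Suppose for contradiction that $T < \infty$. On $[0,T)$, Proposition \ref{prop:c1-estimate} applied with the bounds coming from $u_0$ gives us a single constant $\Lambda$, depending only on $u_0$, such that the singular values of $Du$ lie in $[\Lambda^{-1},\Lambda]$ uniformly in space and time. In particular $u(\cdot,t)$ remains an orientation-preserving local diffeomorphism; combined with the fact that the degree is preserved under homotopy and $u_0$ is a diffeomorphism, we conclude each $u(\cdot,t)$ is a diffeomorphism throughout $[0,T)$. Corollary \ref{cor:longtime} then gives uniform bounds on all spatial derivatives of $u$ on $M \times [0,T)$, and the evolution equation $\partial_t u = F \Delta u$ together with these spatial bounds yields uniform bounds on all time derivatives as well.

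With all derivatives controlled uniformly, the Arzel\`a--Ascoli theorem produces a smooth limit $u(\cdot,T) \co M \to N$ as $t \nearrow T$, and the limit map inherits the singular value bounds $\lambda_i \in [\Lambda^{-1},\Lambda]$, so it is itself a diffeomorphism. Applying Lemma \ref{lem:short-time-existence} at time $T$ with initial data $u(\cdot,T)$ produces a smooth solution on $[T,T+\epsilon)$ for some $\epsilon>0$; gluing this onto the original solution (and using that both solve the same quasilinear parabolic equation with matching data at $t=T$, so the join is smooth) contradicts the maximality of $T$. Hence $T = \infty$.

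The only point that requires a little care is verifying that the glued solution is smooth across $t=T$, which follows from the parabolic Schauder estimates applied on a parabolic neighbourhood straddling $t=T$: the coefficient $F$ extends continuously (indeed smoothly) across $t=T$ by the uniform estimates, and the Schauder theory upgrades this to $C^{k,\alpha}$ regularity of $u$ on any compact subset. This is the only mildly subtle step; the rest is routine bookkeeping on the a priori estimates already proven.
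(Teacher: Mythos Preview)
Your argument is correct and follows essentially the same continuation scheme as the paper: define the maximal time $T$, use the a priori estimates (Proposition~\ref{prop:c1-estimate} and Corollary~\ref{cor:longtime}) together with Arzel\`a--Ascoli to extend smoothly to $t=T$, observe that the singular-value bounds force $u(\cdot,T)$ to be a diffeomorphism, and then restart via Lemma~\ref{lem:short-time-existence} to derive a contradiction. Your extra remarks on time-derivative bounds and on smoothness of the glued solution across $t=T$ are more detailed than the paper's treatment but do not constitute a different approach.
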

\begin{proof}
Let $T\in[0,\infty]$ denote the largest time such that there is a
smooth solution on $[0,T)$. By Lemma \ref{lem:short-time-existence}
we know that $T>0$. If $0<T<\infty$, Corollary \ref{cor:longtime}
tells us that all derivatives of $u$ are equicontinuous, so by the
Arzela-Ascoli theorem we can smoothly extend the solution to $[0,T]$.
But then $u\left(\cdot,T\right)$ is also a diffeomorphism (by continuity
of the derivative and the strong lower bound $\lambda_{i}>\Lambda^{-1}$),
so by Lemma \ref{lem:short-time-existence} again we can extend our
solution to some $[0,T+\epsilon)$, which contradicts the assumption
that $T$ is maximal. Thus $T=\infty$.\end{proof}
\begin{prop}
For any solution $u\co M\times[0,\infty)\to N$ of (\ref{eq:main-flow})
there is a sequence of times $t_{k}\to\infty$ such that $u\left(t_{k}\right)$
converges to a harmonic diffeomorphism; that is, a map of the form
$u\left(x\right)=Ax+y$ for some $y\in\mathbb{R}^{2}$ and $A\in GL\left(2,\mathbb{R}\right)$
sending $\Gamma_{1}$ onto $\Gamma_{2}$.\end{prop}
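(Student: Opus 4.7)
The natural approach is a standard energy--compactness argument, made straightforward by the strong uniform estimates already in hand. First I would exploit the fact that the Dirichlet energy $E(t)=\tfrac{1}{2}\int_{M}|Du|^{2}\,dx$ is a Lyapunov functional for the flow. Differentiating in time and integrating by parts (no boundary terms since $M$ is closed) gives
\[
\frac{d}{dt}E(t)=\int_{M}u_{i}^{\alpha}\partial_{i}\partial_{t}u^{\alpha}\,dx=-\int_{M}\Delta u^{\alpha}\,F\,\Delta u^{\alpha}\,dx=-\int_{M}F|\Delta u|^{2}\,dx,
\]
and invoking $F\ge\Lambda^{-2}$ from Proposition \ref{prop:c1-estimate} and integrating in time yields
\[
\int_{0}^{\infty}\!\!\int_{M}|\Delta u|^{2}\,dx\,dt\le\Lambda^{2}E(0)<\infty.
\]
In particular there exists a sequence $t_{k}\to\infty$ along which $\|\Delta u(\cdot,t_{k})\|_{L^{2}(M)}\to 0$.

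Next I would invoke Corollary \ref{cor:longtime}, which bounds every derivative of $u$ uniformly on $M\times[0,\infty)$. Lifting to universal covers and translating the target by a suitable element of $\Gamma_{2}$ so that $\tilde u(0,t_{k})$ stays in a fixed fundamental domain, Arzela--Ascoli produces a further subsequence along which $\tilde u(\cdot,t_{k})$ converges in $C^{\infty}_{\mathrm{loc}}(\mathbb{R}^{2})$ to a limit $\tilde u_{\infty}$. The lattice equivariance $\tilde u(x+\gamma)=\tilde u(x)+\phi(\gamma)$, where $\phi\co\Gamma_{1}\to\Gamma_{2}$ is the $\pi_{1}$-map induced by $u_{0}$ (preserved under the smooth homotopy), passes to the limit, so $\tilde u_{\infty}$ descends to $u_{\infty}\co M\to N$; and the $L^{2}$ vanishing of $\Delta u(\cdot,t_{k})$ together with smooth convergence forces $\Delta u_{\infty}\equiv 0$.

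To identify $u_{\infty}$ as affine I would observe that each $\partial_{j}\tilde u_{\infty}^{\alpha}$ is $\Gamma_{1}$-periodic and harmonic, hence a harmonic function on the compact manifold $M$, which must be constant. Thus $D\tilde u_{\infty}=A$ is a constant matrix and $\tilde u_{\infty}(x)=Ax+y$. The singular-value bounds $\lambda_{i}\in[\Lambda^{-1},\Lambda]$ pass to the limit, giving $A\in GL(2,\mathbb{R})$; and the equivariance $A\gamma=\phi(\gamma)\in\Gamma_{2}$ for every $\gamma\in\Gamma_{1}$, together with the fact that $\phi$ is a lattice isomorphism (being the fundamental-group map of a diffeomorphism), shows that $A$ maps $\Gamma_{1}$ onto $\Gamma_{2}$.

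There is no real obstacle here once the preceding sections are in place; the only minor point to be careful about is the target translation on the universal cover, which is needed to keep $\tilde u(0,t_{k})$ bounded before applying Arzela--Ascoli, since a priori only the \emph{derivatives} of $\tilde u$ are controlled pointwise in $\mathbb{R}^{2}$.
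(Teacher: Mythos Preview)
Your argument is correct and follows essentially the same route as the paper: compute $\tfrac{d}{dt}E=-\int F|\Delta u|^{2}$, extract a sequence with $\|\Delta u(t_{k})\|_{L^{2}}\to 0$, apply Arzel\`a--Ascoli using the uniform higher-order bounds, and identify the harmonic limit as affine. The only cosmetic difference is that the paper works directly with maps $M\to N$ between compact manifolds (so Arzel\`a--Ascoli needs no target translation), whereas you pass to the universal cover and normalise $\tilde u(0,t_{k})$; both are fine.
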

\begin{proof}
First let's establish the characterisation of harmonic diffeomorphisms
$M\to N$. If $u$ is harmonic then integration by parts over a fundamental
domain of $M$ yields 
\[
\int\left|D^{2}u\right|^{2}=\int\left(\Delta u\right)^{2}=0;
\]
so $u$ is an affine function $u\left(x\right)=Ax+y$ when considered
as a map $\mathbb{R}^{2}\to\mathbb{R}^{2}$. In order for this to
descend to a the quotients $M\to N$ it must satisfy some periodicity
$u\left(x+z\right)=u\left(x\right)+Bz$ for all $x\in\mathbb{R}^{2}$,
$z\in\mathbb{Z}^{2}$, where $B\co\Gamma_{1}\to\Gamma_{2}$ is a homomorphism
of abelian groups; and putting these together we see that in fact
$y=u\left(0\right)$ and $A=B$. Since $u$ is a diffeomorphism, $u^{-1}$
must be of the same form; i.e. $A$ must be an isomorphism.

We can get convergence on a sequence of times by studying the Dirichlet
energy $E\left(u\right)=\frac{1}{2}\int\left|Du\right|^{2}$. Integration
by parts shows that $E\left(t\right)=E\left(u\left(\cdot,t\right)\right)$
is a positive non-increasing function, and thus it converges to some
limit $E_{\infty}$ as $t\to\infty$. Since $\int_{0}^{\infty}E'(t)dt=E_{\infty}-E\left(0\right)$
is finite and $E'(t)$ is nonpositive, we can extract a sequence of
times $t_{k}\to\infty$ such that $E'\left(t_{k}\right)\to0$. From
the evolution equation we then see that
\[
\frac{dE}{dt}=-\int F\left(\Delta u\right)^{2}\to0;
\]
so the lower bound on $F$ implies that $\Delta u\left(t_{k}\right)\to0$
in $L^{2}$. Since $u$ is bounded in every $C^{k}$ norm, all of
its derivatives are equicontinuous and thus we can use Arzela-Ascoli
to pass to a diagonal subsequence of times (replacing $t_{k}$ from
now on) on which $u\left(t_{k}\right)$ converges smoothly to a limit
$u_{\infty}$. In particular we have $\Delta u\left(t_{k}\right)\to\Delta u_{\infty}$
uniformly and thus in $L^{2}$ as well, so $\Delta u_{\infty}=0$.
\end{proof}
To improve this convergence on a sequence to convergence for all times,
we estimate the second-order energy $q\left(t\right)=\left\Vert D^{2}u\left(t\right)\right\Vert _{L^{2}}^{2}=\int\left|\Delta u\right|^{2}.$
\begin{lem}
There are positive constants $C_{0},C_{1}$ depending only on $\Lambda$
such that $q\left(t\right)$ satisfies
\[
\frac{dq}{dt}\le-C_{0}\left(1-C_{1}q\left(t\right)\right)\left\Vert D^{3}u\right\Vert _{L^{2}}^{2}.
\]
\end{lem}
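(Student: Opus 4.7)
The plan is to differentiate $q(t)=\int|\Delta u|^{2}\,dA$ along the flow, integrate by parts once to extract a coercive term of the form $-2\int F|D\Delta u|^{2}\,dA$, and control the remaining cross term using two-dimensional Sobolev interpolation. The coefficient structure $(1-C_{1}q)$ is the shape one gets precisely when a quartic nonlinearity in $D^{2}u$ can be factored as $q\cdot\|D^{3}u\|_{L^{2}}^{2}$, so this is what to aim for.

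Concretely, I would first compute
\[
\frac{dq}{dt}=2\int\Delta u\cdot\Delta\partial_{t}u\,dA=2\int\Delta u\cdot\Delta(F\Delta u)\,dA
=-2\int F|D\Delta u|^{2}\,dA-2\int(\Delta u)\,DF\cdot D\Delta u\,dA,
\]
the last line by integration by parts over the closed surface $M$. Since $F=F(Du)$ is smooth and $Du$ takes values in the compact set determined by $\Lambda$ (using Proposition \ref{prop:c1-estimate} and the fact that orientation is preserved so $\det Du>0$), we have $|DF|\le C(\Lambda)|D^{2}u|$ pointwise, and the coercive term satisfies $-2\int F|D\Delta u|^{2}dA\le-2\Lambda^{-2}\|D\Delta u\|_{L^{2}}^{2}$. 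I would handle the cross term with Young's inequality: for any $\epsilon>0$,
\[
\Bigl|\int(\Delta u)\,DF\cdot D\Delta u\,dA\Bigr|\le\epsilon\|D\Delta u\|_{L^{2}}^{2}+\frac{C^{2}}{4\epsilon}\int|\Delta u|^{2}|D^{2}u|^{2}\,dA.
\]

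The main obstacle is controlling the quartic integral $\int|\Delta u|^{2}|D^{2}u|^{2}\,dA$ by $q\,\|D^{3}u\|_{L^{2}}^{2}$ (as opposed to the $\sqrt{q}$ one would get from naive Cauchy--Schwarz on the cross term in one go). The tool is the two-dimensional Ladyzhenskaya inequality $\|g\|_{L^{4}}^{4}\le C\|g\|_{L^{2}}^{2}\|Dg\|_{L^{2}}^{2}$, valid on $\mathbb{T}^{2}$ for mean-zero $g$. Both $\Delta u$ and $D^{2}u$ are mean-zero on $M$ (the first is a divergence, the second is a derivative of periodic data), so H\"older followed by Ladyzhenskaya gives
\[
\int|\Delta u|^{2}|D^{2}u|^{2}\,dA\le\|\Delta u\|_{L^{4}}^{2}\|D^{2}u\|_{L^{4}}^{2}\lesssim\|\Delta u\|_{L^{2}}\|D\Delta u\|_{L^{2}}\cdot\|D^{2}u\|_{L^{2}}\|D^{3}u\|_{L^{2}}.
\]
The flat-torus integration-by-parts identities $\|D^{2}u\|_{L^{2}}^{2}=\|\Delta u\|_{L^{2}}^{2}=q$ and $\|D\Delta u\|_{L^{2}}=\|D^{3}u\|_{L^{2}}$ (both proved by integrating by parts twice) then collapse this to $\lesssim q\,\|D^{3}u\|_{L^{2}}^{2}$.

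Finally, choosing $\epsilon$ small enough that $\epsilon\le\Lambda^{-2}$ lets the first Young term be absorbed into the coercive piece, yielding
\[
\frac{dq}{dt}\le-\Lambda^{-2}\|D^{3}u\|_{L^{2}}^{2}+C''q\,\|D^{3}u\|_{L^{2}}^{2}=-C_{0}\bigl(1-C_{1}q\bigr)\|D^{3}u\|_{L^{2}}^{2}
\]
with $C_{0}=\Lambda^{-2}$ and $C_{1}=C''\Lambda^{2}$, both depending only on $\Lambda$. The delicate point, as flagged, is the interplay of Young and Ladyzhenskaya: splitting off one factor of $D\Delta u$ first and then bounding the remaining $\|\Delta u \cdot D^{2}u\|_{L^{2}}^{2}$ is what turns $\sqrt{q}$ into $q$ in the final coefficient.
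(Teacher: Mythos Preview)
Your proof is correct and follows essentially the same route as the paper: differentiate $q$, integrate by parts once to isolate a coercive third-order term, split the cross term with Young/Peter--Paul, and then apply the two-dimensional Gagliardo--Nirenberg (Ladyzhenskaya) inequality to convert the quartic $D^{2}u$ integral into $q\,\|D^{3}u\|_{L^{2}}^{2}$. The only cosmetic differences are that you track $\Delta u$ and $D\Delta u$ and invoke the flat-torus identities $\|\Delta u\|_{L^{2}}=\|D^{2}u\|_{L^{2}}$, $\|D\Delta u\|_{L^{2}}=\|D^{3}u\|_{L^{2}}$ at the end, whereas the paper writes everything schematically in $D^{2}u$ and $D^{3}u$ from the outset.
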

\begin{proof}
From the evolution equation we can compute
\[
\frac{dq}{dt}=-2\int F\left(Du\right)\left|D^{3}u\right|^{2}-2\int\dot{F}\left(Du\right)*D^{2}u*D^{2}u*D^{3}u.
\]
where $\dot{F}$ is the matrix of derivatives $\partial F/\partial u_{j}^{i}$.
Estimating the second term with Peter-Paul as
\[
\left|\int\dot{F}\left(Du\right)*D^{2}u*D^{2}u*D^{3}u\right|\le\left\Vert \dot{F}\left(Du\right)\right\Vert _{L^{\infty}}\left(\epsilon\left\Vert \left|D^{2}u\right|^{2}\right\Vert _{L^{2}}^{2}+\frac{1}{\epsilon}\left\Vert D^{3}u\right\Vert _{L^{2}}^{2}\right)
\]
with $\epsilon=\Lambda^{2}\left\Vert \dot{F}\left(Du\right)\right\Vert _{L^{\infty}}$,
we get
\begin{eqnarray*}
\frac{dq}{dt} & \le & -\Lambda^{-2}\left\Vert D^{3}u\right\Vert _{L^{2}}^{2}+\Lambda^{2}\left\Vert \dot{F}\left(Du\right)\right\Vert _{L^{\infty}}^{2}\left\Vert D^{2}u\right\Vert _{L^{4}}^{4}.
\end{eqnarray*}
Applying the Gagliardo-Nirenberg interpolation inequality $\left\Vert f\right\Vert _{L^{4}}^{2}\le C\left\Vert f\right\Vert _{L^{2}}\left\Vert Df\right\Vert _{L^{2}}$
to $\left\Vert D^{2}u\right\Vert _{L^{4}}$, we arrive at
\[
\frac{dq}{dt}\le-\Lambda^{-2}\left\Vert D^{3}u\right\Vert _{L^{2}}^{2}\left(1-C\Lambda^{4}\left\Vert \dot{F}\left(Du\right)\right\Vert _{L^{\infty}}^{2}q\right)
\]
as desired. (Since $F$ is continuously differentiable on the compact
image of $Du$, $\left\Vert \dot{F}\left(Du\right)\right\Vert _{L^{\infty}}$
is a finite constant depending on $\Lambda$.)
\end{proof}
By the uniform subconvergence $D^{2}u\left(t_{k}\right)\to0$, there
is some time $t'$ at which $C_{1}q\left(t'\right)<\frac{1}{2}$,
and we see $q'\left(t\right)<0$ whenever this is true; so this inequality
is preserved for all time. Thus we have
\begin{cor}
For $t>t'$, $q$ satisfies
\begin{equation}
\frac{dq}{dt}<-\frac{C_{0}}{2}\left\Vert D^{3}u\right\Vert _{L^{2}}^{2}.\label{eq:dqdt-d3u2}
\end{equation}
\end{cor}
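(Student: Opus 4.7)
The plan is a straightforward continuity/bootstrap argument on the condition $C_1 q(t) < 1/2$: the preceding lemma makes $q$ strictly decreasing as soon as this condition is satisfied, which in turn prevents the condition from ever being violated.

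First I would choose $t'$ using the subconvergence established in the previous proposition. Since $u(\cdot, t_k) \to u_\infty$ smoothly and $u_\infty$ is affine, we have $\|D^2 u(\cdot, t_k)\|_{L^\infty} \to 0$, hence $q(t_k) = \int |\Delta u(\cdot, t_k)|^2 \to 0$. Fix any $t'$ in this sequence large enough that $C_1 q(t') < 1/2$.

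Next, the bootstrap: let $T^{\ast} = \sup\{T \ge t' : C_1 q(s) < 1/2 \text{ for all } s \in [t', T]\}$. By continuity of $q$ and the strict inequality at $t'$, this supremum is well-defined and $T^{\ast} > t'$. On the interval $[t', T^{\ast})$ the preceding lemma gives
\[
\frac{dq}{dt} \le -C_0\bigl(1 - C_1 q(t)\bigr)\|D^3 u\|_{L^2}^2 < -\frac{C_0}{2}\|D^3 u\|_{L^2}^2 \le 0,
\]
so $q$ is (non-strictly) decreasing there and in particular $q(t) \le q(t')$ for all $t \in [t', T^{\ast})$. Passing to the limit $t \to T^{\ast}$ (if $T^{\ast} < \infty$) yields $C_1 q(T^{\ast}) \le C_1 q(t') < 1/2$, a strict inequality which, by continuity of $q$, persists on some open neighbourhood of $T^{\ast}$. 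This contradicts the maximality of $T^{\ast}$, so $T^{\ast} = \infty$.

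Consequently $C_1 q(t) < 1/2$ for every $t > t'$, and feeding this back into the lemma gives exactly the stated inequality \eqref{eq:dqdt-d3u2}. The only real point to be careful about is maintaining the strict inequality throughout; this is handled by the fact that strict inequality at $t'$ is preserved by the monotonicity, which is why a clopen-style bootstrap closes without requiring any further control on $\|D^3 u\|_{L^2}$. No genuine obstacle arises here — the work was done in the preceding lemma.
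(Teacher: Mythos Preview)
Your argument is correct and follows the same line as the paper's: pick $t'$ from the subconvergence so that $C_1 q(t')<\tfrac12$, observe that the lemma then forces $q'\le 0$, and conclude the threshold is preserved for all later times. The paper compresses this into a single sentence, whereas you spell out the clopen/bootstrap step explicitly; there is no substantive difference.
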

\begin{prop}
The quantity $q$ converges exponentially to zero: that is, $q\left(t\right)\le Ae^{-\omega t}$
for some $A,\omega>0$.\end{prop}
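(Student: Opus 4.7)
The plan is to bootstrap the differential inequality (\ref{eq:dqdt-d3u2}) into exponential decay via a Poincar\'e-type inequality that bounds $q$ by the $L^2$ norm of $D^3u$ on the torus.

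First I would observe that, lifting to universal covers, $u(x+z) - u(x) = Bz$ for $z \in \Gamma_1$ and a fixed matrix $B$, so $Du$ (and hence every higher derivative $D^k u$ for $k\ge 1$) descends to a $\Gamma_1$-periodic tensor on $M$. In particular each component of $D^2u$ is $\Gamma_1$-periodic, and integrating $\partial_k(Du)$ over a fundamental domain gives zero, so $D^2u$ has zero mean. The standard Poincar\'e inequality on the flat torus then yields $\|D^2u\|_{L^2}^2 \le C_P \|D^3u\|_{L^2}^2$, where $C_P$ depends only on $M$ (concretely, on the first nonzero eigenvalue of the Laplacian on $M$).

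Combining this with (\ref{eq:dqdt-d3u2}) gives $\frac{dq}{dt} < -\omega\, q(t)$ for $t > t'$, where $\omega = C_0/(2C_P)$. Gr\"onwall's inequality then immediately produces
\[
q(t) \le q(t')\,e^{-\omega(t-t')} = A e^{-\omega t}, \quad t>t',
\]
with $A = q(t')e^{\omega t'}$, and on the compact interval $[0,t']$ the bound is trivial (enlarging $A$ if necessary, using that $q$ is continuous and bounded thanks to Corollary \ref{cor:longtime}).

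The only mildly subtle point is the applicability of Poincar\'e to $D^2u$: this is not automatic because $u$ itself does not descend to a function on $M$, but only because its derivative does. Once that zero-mean property is in place, the argument is a one-line Gr\"onwall. I do not anticipate any further obstacles.
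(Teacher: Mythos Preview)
Your proposal is correct and follows essentially the same route as the paper: apply the Poincar\'e inequality to the components of $D^{2}u$ (which have zero mean on $M$) to get $q\lesssim\|D^{3}u\|_{L^{2}}^{2}$, feed this into the differential inequality, and integrate. Your treatment is in fact slightly more careful than the paper's, since you explicitly justify the zero-mean hypothesis needed for Poincar\'e and you handle the initial interval $[0,t']$ separately.
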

\begin{proof}
Applying the Poincar\'{e} inequality to $u_{jk}^{i}$ and summing,
we see $\left\Vert D^{2}u\right\Vert _{L^{2}}^{2}\lesssim\left\Vert D^{3}u\right\Vert _{L^{2}}^{2}$,
and thus for $t>t'$ we have
\[
\frac{dq}{dt}<-\omega q
\]
for a positive constant $\omega$. Comparison with the corresponding
ODE proves that $q\left(t\right)\le q\left(t'\right)e^{-\omega\left(t-t'\right)}$.
\end{proof}
As a consequence we obtain the full convergence, completing the proof
of Theorem \ref{thm:main-theorem-T2}.
\begin{prop}
The flow $u\left(t\right)$ converges smoothly to $u_{\infty}$ as
$t\to\infty$.\end{prop}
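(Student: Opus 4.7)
The plan is to promote the exponential decay of $q(t) = \|D^2 u(t)\|_{L^2}^2$ from the preceding proposition into exponential decay of $\|D^k u(t)\|_{L^\infty}$ for every $k \geq 2$, and from there conclude that $u(t)$ is Cauchy in each $C^k$ norm and so converges smoothly (necessarily to $u_\infty$, by the already-established subsequential convergence).

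First I would apply Gagliardo--Nirenberg interpolation on the flat torus to $D^2 u$: for each $k \geq 2$, taking for instance $K = k+2$,
$$\|D^k u(t)\|_{L^\infty} \lesssim \|D^2 u(t)\|_{L^2}^{1/k}\,\|D^K u(t)\|_{L^2}^{(k-1)/k}.$$
Corollary \ref{cor:longtime} bounds $\|D^K u\|_{L^2}$ uniformly in time, while $\|D^2 u\|_{L^2}^2 = q(t)$ decays exponentially by the previous proposition; so there exist constants $A_k, \omega_k > 0$ with $\|D^k u(t)\|_{L^\infty} \leq A_k e^{-\omega_k t}$ for every $k \geq 2$.

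Next, differentiating the flow $\partial_t u = F(Du)\Delta u$ yields
$$\partial_t D^k u \;=\; \sum_{j=0}^{k} \binom{k}{j}\, D^j[F(Du)] \cdot D^{k-j}\Delta u,$$
and each summand contains the factor $D^{k-j}\Delta u$, which is a linear combination of components of $D^{k-j+2} u$ whose $L^\infty$ norm decays exponentially by the previous step; the remaining factor $D^j[F(Du)]$ is a polynomial expression in $Du, D^2 u, \ldots, D^{j+1} u$ and is therefore uniformly bounded by Corollary \ref{cor:longtime}. Thus $\|\partial_t D^k u(t)\|_{L^\infty}$ decays exponentially for every $k \geq 0$, and integrating gives
$$\|D^k u(t) - D^k u(s)\|_{L^\infty} \leq \int_s^t \|\partial_\tau D^k u\|_{L^\infty}\, d\tau \longrightarrow 0 \text{ as } s,t \to \infty.$$
So $D^k u(t)$ is Cauchy in $L^\infty$ and converges to some limit, which the subsequential smooth convergence along $\{t_k\}$ forces to equal $D^k u_\infty$. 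The main analytic content was established in the preceding lemma and proposition; this final step is essentially interpolation plus chain-rule bookkeeping to propagate the $L^2$ decay of $D^2 u$ to $L^\infty$ decay of all higher derivatives, and the only subtlety is verifying that every term in the expansion of $\partial_t D^k u$ carries at least one factor of $D^i u$ with $i \geq 2$.
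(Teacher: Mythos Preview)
Your argument is correct and follows essentially the same strategy as the paper: interpolate between the exponentially decaying $\|D^{2}u\|_{L^{2}}$ and the uniformly bounded higher norms from Corollary~\ref{cor:longtime}, integrate the time derivative to obtain a Cauchy sequence, and identify the limit via the subsequential convergence. The paper organises the interpolation slightly differently---it first shows $\|\partial_{t}u\|_{L^{\infty}}\lesssim\|D^{2}u\|_{L^{\infty}}$ decays, deduces $C^{0}$ convergence of $u$, and then interpolates $\|D^{j}(u-u_{\infty})\|_{L^{\infty}}^{2}\lesssim\|u-u_{\infty}\|_{L^{\infty}}\|D^{2j}(u-u_{\infty})\|_{L^{\infty}}$---whereas you obtain exponential decay of $\|D^{k}u\|_{L^{\infty}}$ for every $k\ge 2$ directly and feed that through the Leibniz expansion of $\partial_{t}D^{k}u$; both routes are standard and equally valid.
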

\begin{proof}
Using $\left\Vert \Delta u\right\Vert _{L^{2}}=\left\Vert D^{2}u\right\Vert _{L^{2}}$
and the Gagliardo-Nirenberg interpolation inequality $\left\Vert f\right\Vert _{L^{\infty}}^{2}\lesssim\left\Vert Df\right\Vert _{L^{\infty}}\left\Vert f\right\Vert _{L^{2}}$,
we see
\[
\left\Vert \partial_{t}u\right\Vert _{L^{\infty}}\le\Lambda^{2}\left\Vert D^{2}u\right\Vert _{L^{\infty}}\le Me^{-\omega t/2}
\]
for some $M>0$. Thus for $b>a>0$ we have 
\[
\left|u\left(x,b\right)-u\left(x,a\right)\right|\le\int_{a}^{b}\partial_{t}u\left(x,t\right)dt\le\int_{a}^{b}Me^{-\omega t/2}=\frac{2M}{\omega}\left(e^{-\omega a/2}-e^{-\omega b/2}\right)
\]
which converges to zero as $a,b\to\infty$; i.e. $u\left(t\right)$
is uniformly Cauchy as $t\to\infty$. Since we already know it converges
to $u_{\infty}$ on a subsequence, we obtain the full uniform convergence
$u\left(t\right)\to u_{\infty}$. Once again interpolating
\[
\left\Vert D^{j}\left(u-u_{\infty}\right)\right\Vert _{L^{\infty}}^{2}\lesssim\left\Vert u-u_{\infty}\right\Vert _{L^{\infty}}\left\Vert D^{2j}\left(u-u_{\infty}\right)\right\Vert _{L^{\infty}}
\]
and applying the uniform $C^{2j}$ bound, we get $C^{j}$ convergence
for all $j$.
\end{proof}
\bibliographystyle{halpha}
\bibliography{refs}

\begin{thebibliography}{Bak11}

\bibitem[And]{andrews-pinch}
Ben Andrews.
\newblock Pinching estimates and motion of hypersurfaces by curvature
  functions.
\newblock {\em J. Reine Angew. Math.}, 608:17--33.

\bibitem[Bak11]{bakerthesis}
Charles Baker.
\newblock The mean curvature flow of submanifolds of high codimension, 2011,
  arXiv:1104.4409.

\bibitem[GT83]{GT-EPDE}
David Gilbarg and Neil~S. Trudinger.
\newblock {\em Elliptic partial differential equations of second order}.
\newblock Grundlehren der mathematischen Wissenschaften. Springer-Verlag,
  Berlin, New York, 1983.

\bibitem[Lie96]{Lieberman-book}
Gary~M. Lieberman.
\newblock {\em Second order parabolic differential equations}.
\newblock World Scientific, Singapore, River Edge (N.J.), 1996.

\bibitem[LU64]{LU1}
O.~A. Ladyzhenskaya and N.~N. Uraltseva.
\newblock On h{\"o}lder continuity of solutions and their derivatives of linear
  and quasilinear elliptic and parabolic equations.
\newblock {\em Trudy Steklov Inst.}, 73:172--220, 1964.
\newblock (In Russian).

\end{thebibliography}

\end{document}